\documentclass[12pt]{amsart}

\usepackage{amsmath,amssymb,amsthm}
\usepackage[mathscr]{eucal}

\textwidth=16truecm
\textheight=21truecm
\oddsidemargin=0cm
\evensidemargin=0cm

\arraycolsep=2pt
\newtheorem{theorem}{Theorem}

\theoremstyle{plain}

\newtheorem{lemma}{Lemma}

\theoremstyle{definition}

\theoremstyle{definition}

\def\D{\mathbb{D}}%
\def\T{\mathbb{T}}%
\def\H{\mathcal{H}}%
\def\leq{\leqslant}%
\def\geq{\geqslant}%
\def\sk{\smallskip}
\def\1{\mathbf 1}
\DeclareMathOperator{\kerr}{Ker}
\DeclareMathOperator{\imm}{Ran}

\DeclareMathOperator{\spann}{span}

\thispagestyle{empty}
\begin{document}
\title{Schmidt subspaces of Hankel operators }

\author{Maria T. Nowak, Pawe\l \ Sobolewski, and Andrzej So\l tysiak }

\address{Maria T. Nowak
\newline Institute of Mathematics
\newline Maria Curie-Sk{\l}odowska University
\newline pl. M.
Curie-Sk{\l}odowskiej 1
\newline 20-031 Lublin, Poland}
\email{maria.nowak@mail.umcs.pl}

\address{Pawe{\l} Sobolewski
\newline Faculty of Electrical Engineering and Computer Science
\newline Lublin University of Technology
\newline 38A Nadbystrzycka St.
\newline 20-618 Lublin, Poland}
\email{p.sobolewski@pollub.pl}

\address{Andrzej So{\l}tysiak
\newline Faculty of Mathematics and Computer Science
\newline Adam Mickiewicz University \newline ul. Uniwersytetu Pozna\'nskiego 4
\newline 61-614 Pozna\'n, Poland}
\email{asoltys@amu.edu.pl}

\subjclass [2010]{47B35, 30H10}

\keywords{Hankel operators, Hardy space, model spaces, de Branges-Rovnyak spaces, nearly $S^\ast$-invariant subspaces, kernels of Toeplitz operators}

\thispagestyle{empty}
\begin{abstract} We consider bounded Hankel operators $H_{\psi}$ acting on the Hardy space $H^2$ to $L^2\ominus H^2$ and obtain results on the Schmidt subspaces $E^+_s(H_\psi)$ of such operators defined as the kernels of $ H_{\psi}^{\ast}H_{\psi}-s^2I$ where $s>0$. These  spaces have been recently studied in \cite{GP} and \cite{GP1} in the context of anti-linear Hankel operators. We also discuss the  range of the Hankel operators with symbols being the complex conjugates of functions in the unit ball of $H^{\infty}$.
\end{abstract}
\maketitle

\section{Introduction}
Let $H^2$ denote  the classical Hardy space of the unit disk $\D$ and let $\T=\partial \D$. Under the standard identification of a function  $ f\in H^2$ with its  nontangential boundary value function  in $L^2( \T)=L^2 $, also denoted by $f$, we  consider $H^2$ as  a closed subspace of $L^2$. Let $H_0^2$ denote the subspace of $H^2$ consisting of functions vanishing at $0$,
and $P_+$ and $P_- $   the orthogonal projections of $L^2$ onto $H^2$ and $L^2\ominus H^2=\overline{H_0^2}$, respectively,  where $\overline{H_0^2}$ consists of complex conjugates of the functions in $H_0^2$.

For $\varphi\in L^2$, we define the Hankel operator $H_{\varphi}f: H^2\to \overline{H_0^2} $  on the dense subset of polynomials in $H^2$ by  the formula
 \begin{equation}\label{hankel}
H_{\varphi}f= (I-P_+)(\varphi f)=P_-(\varphi f).
\end{equation}
It is worth noting  here that   Hankel operators of this type  are not only  studied on  the Hardy space but also  on different spaces of holomorphic function, see, e.g. \cite{ER}, \cite{HV}, \cite{DZ}.
However, in this paper we deal with classical Hardy space $H^2$.

It is known (see \cite [ Theorem 1.3] {P}, \cite [pp. 196--197] {Z})  that $H_{\varphi}$ is bounded on $H^2$ if and only if there exists a function $\psi\in L^{\infty}$
such that  $H_{\psi}=H_{\varphi}$. This implies that bounded Hankel operators may be also represented by   Hankel operators with symbols   being conjugates  of analytic functions of bounded mean oscillation (BMOA).

Throughout the paper we will use a conjugation operator  $C\colon\,L^2(\T)\rightarrow L^2(\T)$  defined by
\[
Cf(z)=\bar{z}\overline{f(z)}, \quad  z\in \T.
\]
It is known that $C$ is an anti-linear surjection satisfying the following relations:
\[
C^2=I,\quad  CP_+=P_-C, \quad\text{and }\quad CH^2=\overline{H_0^2}= L^2\ominus H^2, \quad \text{see, e.g., \cite [p. 336 ] {N1}.}
\]

For a Hankel operator $H_{\psi}$  and $s>0$, the Schmidt space $E_s^+(H_{\psi})$ is defined as the kernel of  the operator $H_{\psi}^{\ast}H_{\psi} -s^2I$. A real number $s>0$ is called a singular value of $H_{\psi}$ if $s^2$ is an eigenvalue of $H_{\psi}^{\ast}H_{\psi}$.

 In the recent papers
\cite{GP} and \cite {GP1} the authors described the Schmidt spaces for anti-linear Hankel operators $\widetilde H_{u}$ with the symbols $u$ being  BMOA functions. The operator
$\widetilde H_{u}$ is defined on $H^2$ by
\[
\widetilde H_{u}f= P_+(u\overline {f}).
\]
The Schmidt spaces $E_{\widetilde H_u}(s)$ for  anti-linear Hankel operators $\widetilde H_{u}$ are defined analogously, that is,
\[
E_{\widetilde H_u}(s)=\kerr(\widetilde H _u^2-s^2I),\quad s>0.
\]
The  main result in the above mentioned papers states that every Schmidt space $E_{\widetilde H_u}(s)$ is of the form $pK_{\theta}$, where $\theta$ is an inner function, $K_{\theta}= H^2\ominus\theta H^2$ is the corresponding model space and $p\in H^2$ is an isometric multiplier on $K_{\theta}$.

We  claim  that for $u\in \text{ BMOA}$ the Schmidt spaces $E_{\widetilde H_u}(s)$ and  $E^+_s(H_{\overline{zu}})$  coincide. To see this
notice that if $C$ is the conjugation defined above, then
\[
C H_{\overline{uz}}(f)=CP_-(\overline{uz}f)=P_+C(\overline{uz}f)=P_+(u\bar{f})=\widetilde{H}_uf
\]
and
\[
H^\ast_{\overline{uz}}C(f)=
P_+(uz C(f))=P_+(u\bar{f})=\widetilde H_uf.
\]
Consequently,
\[
H^\ast_{\overline{uz}}H_{\overline{uz}}=H^\ast_{\overline{uz}}C\cdot C H_{\overline{uz}}=\widetilde{H}^2_u.
\]

For $\chi\in
L^{\infty}$ the Toeplitz operator $T_{\chi}$ on $H^2$ is given by
$T_{\chi}f=P_{+}(\chi f)$.
Let $S=T_z$ denote the unilateral shift on $H^2$. Then
 $S^*=T_{\bar z}$ is the so called   backward shift operator on $H^2$.
 A closed subspace $M$ of $H^2$ is said to be nearly
$S^*$-invariant if
\[
(f\in M, \ f(0)=0)\implies  S^*f\in M.
\]
Nearly $S^*$-invariant spaces are characterized by
 Hitt's Theorem (\cite{H}, \cite{S1}, \cite{S2}, \cite{FM}).

\sk

\textit{The nontrivial  closed subspace M of $H^2$ is
  nearly $S^*$-invariant  if and only if
there exists a function $f\in\ M$   of unit norm such that $f(0)>0$   and an $S^\ast$-invariant subspace $M'$  on which  $T_f$
acts isometrically and such that $M= T_fM'$.}

\sk

It is  known that the kernels of Toeplitz operators are  nearly
$S^*$-invariant but not every nearly   $S^*$-invariant space is the kernel of a Toeplitz operator (\cite{S2}, \cite{FM}).

In Section 3, we show that if $s=\|H_{\psi}\|$, then $E^+_s(H_{\psi})$ is actually the kernel of a Toeplitz operator. Consequently, Lemma 7.1 in \cite{GP} follows from the  known properties of kernels of Toeplitz operators (see e.g. \cite{S2}, \cite [Chapter 30]{FM}).
We also  prove, in our setting, that if   not all functions in the space $E^+_s(H_{\psi})$ vanish at 0, then this space is nearly $S^\ast$- invariant.  In Theorem 5 we  show that if all functions in  $E^+_s(H_{\psi})$ have zeros at $0$  of order $n$, but not of order $n+1$, then  $E^+_s(H_{\psi})=z^nN$, where $N$ is a nearly $S^\ast$- invariant subspace of $H^2$. The proof of Theorem 5  is different from the proof of the analogous result for anti-linear Hankel operators given in \cite[Subsection 2.5]{GP1}.

In the next  section we also discuss  the range  of  $H_{\bar b}$, where $b$ is a function from the unit ball of $H^{\infty}.$

\section
{Remarks on the range of some Hankel operators}
It is well known  that   a  Hankel operator  is a partial isometry if and only if it has the form $H_{\bar{\theta}}$ where $\theta$ is an inner function. The initial space of $H_{\bar{\theta}}$ is the model space $K_{\theta}= H^2\ominus \theta H^2$ and the final space of $H_{\bar{\theta}}$
is
\begin{equation}
\overline{H_0^2}\ominus\bar{\theta}\overline{H^2_0}
=\bar z\overline{K_{\theta}}.
\label{range}
\end{equation}
It is also  worth noting   that if the kernel of  $ H_{\psi}$ is trivial, then the range of $H_{\psi}$ cannot be closed in $\overline {H_0^2}$,  because  for any $ \psi\in L^{\infty}, $  $\|H_{\psi}z^n\|\rightarrow 0 $, as $n\rightarrow \infty$.
The Hankel operators with closed ranges are characterized by  Theorem 2.8 in \cite{P}. They are exactly Hankel operators with symbols $\psi=\bar{\theta}\varphi$, where
$\theta$ is an inner function,  $ \varphi\in H^{\infty}$,  and  $\theta$ and $\varphi$ satisfy the corona condition.

 Observe that (\ref{range}) means  that $CH_{\bar\theta}$ maps $H^2$ onto $K_{\theta}$. In Theorem 1  we show that for $b$ in the unit ball of $H^{\infty}$  the range of  $C H_{\bar b}$ is contained in  the de Branges-Rovnyak space $\H(b)$, and when  $b$ is an extreme point of the unit ball of $H^{\infty}$, $\imm H_{b}$ is  dense in  $\H(b)$.

Recall that for $b$ in the unit ball of $H^{\infty}$, the de Branges-Rovnyak space $\H(b)$ is the image of $H^2$ under the operator $(I-T_bT_{\bar b})^{1/2}$ with the  range norm $\|\cdot\|_b$
defined by
\[
\|(I-T_bT_{\bar b})^{1/2}f\|_b=\|f\|_2\qquad \text{for} \ \ f\in H^2\ominus \kerr(I-T_bT_{\bar b})^{1/2}.
\]
Analogously, the space $\H(\bar b)$ is defined as the image of $H^2$  under  $(I-T_{\bar b}T_b)^{1/2}$ with the corresponding range norm $\|\cdot\|_{\bar b}$. Proofs of the stated below properties of these spaces can be found in \cite {S} and \cite {FM}.

It is known that $\H(b)$ is a Hilbert space with
reproducing kernel
\[
k_w^b(z)=\frac{1-\overline{b(w)}b(z)}{1-\bar{w}z}\quad(z,w\in\D).
\]
If $b$ is inner, then $\H(b)=K_{b}$. Moreover, $\H(\bar b)=\{0\}$ if and only if $b$ is an inner function.
We also mention that $\H(b)$ is a dense subset of $H^2$ in $\| \cdot\|_2$  norm if and only if $b$ is not inner and $H^2= \H(b)$ if and only if $\|b\|_{\infty}<1$.

Many  properties  of  $\H(b)$ depend  on whether $b$ is or is not extreme point of the closed unit ball of $H^{\infty}$.  It is known that $b$ is an extreme  point of the unit ball of $H^{\infty}$  if and only if
\[
\int_{\T}\log(1-|b(e^{it})|^2)\,dt=-\infty.
\]

We will simply say that $b$ is  an extreme or nonextreme function. Clearly, every inner function is extreme.

\begin{theorem}
If  $b$ is a function in the closed unit ball of $H^\infty$, then $CH_{\bar b}$ maps $H^2$ into  $\H(b)$.
Moreover, if $b$ is extreme, then  $CH_{\bar b}(H^2)$ is dense  in $\H(b)$.
\end{theorem}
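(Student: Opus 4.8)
The plan is to compute $CH_{\bar b}$ explicitly and recognize it as a known operator whose range is contained in $\mathcal{H}(b)$. First I would use the relations $CP_- = P_+C$ and $C^2 = I$, together with the definition $C f(z) = \bar z\,\overline{f(z)}$, to rewrite, for $f\in H^2$,
\[
CH_{\bar b}f = C P_-(\bar b f) = P_+ C(\bar b f).
\]
Now $C(\bar b f)(z) = \bar z\,\overline{\bar b(z) f(z)} = \bar z\, b(z)\,\overline{f(z)} = b(z)\, C f(z)$, and since $Cf \in \overline{H^2_0}$ we can write $Cf = \overline{z g}$ for some $g\in H^2$; thus $P_+ C(\bar b f) = P_+(b\, \overline{zg}) = T_b(\overline{zg}) $ where I read $T_b$ as acting after projection. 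More cleanly: one checks that $CH_{\bar b} = T_b C H_{\bar 1}\cdots$ — the cleanest identity to aim for is
\[
CH_{\bar b}f = T_b\big(CH_{\bar{1}} f\big) \quad\text{up to a shift},
\]
but the identity I actually want is the well-known fact that $CH_{\bar b}$, as an operator $H^2\to H^2$, satisfies $CH_{\bar b} = (I - T_b T_{\bar b})\,(\text{something})$ or, more to the point, $\operatorname{Ran}(CH_{\bar b}) \subseteq \operatorname{Ran}(I - T_bT_{\bar b})^{1/2}$. The key algebraic computation, which I expect to be routine once the conjugation bookkeeping is done, is the operator identity $H_{\bar b}^* H_{\bar b} = I - T_{\bar b} T_b$ on $H^2$ (a standard fact, since $T_\varphi^*T_\varphi + H_\varphi^* H_\varphi = T_{|\varphi|^2}$ specialized to $\varphi = \bar b$, $|\bar b|^2 = |b|^2$, giving $T_b T_{\bar b} + H_{\bar b}^* H_{\bar b} = T_{|b|^2}$, and then relating to $I - T_{\bar b}T_b$ via $C$). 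Combining $C H_{\bar b}^* C = H_{\bar b}$-type symmetries with $CH_{\bar b}(CH_{\bar b})^* = C H_{\bar b} H_{\bar b}^* C$, I would identify $C H_{\bar b} H_{\bar b}^* C = I - T_b T_{\bar b}$ on $H^2$; since $\mathcal{H}(b) = \operatorname{Ran}(I - T_bT_{\bar b})^{1/2}$ and $\operatorname{Ran} A = \operatorname{Ran}(AA^*)^{1/2}$ for any bounded operator $A$, this gives $\operatorname{Ran}(CH_{\bar b}) \subseteq \mathcal{H}(b)$, and in fact equals the range of $(I-T_bT_{\bar b})^{1/2}$, hence lands in $\mathcal H(b)$.

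For the density statement when $b$ is extreme, I would argue by taking orthogonal complements in $\mathcal{H}(b)$. A function $h\in\mathcal{H}(b)$ is orthogonal to $CH_{\bar b}(H^2)$, equivalently (by the previous paragraph, modulo identifying the two range norms) orthogonal in the appropriate sense to $\operatorname{Ran}(I - T_bT_{\bar b})$; since $\mathcal H(b)$ carries the range norm of $(I-T_bT_{\bar b})^{1/2}$, density of $\operatorname{Ran}(CH_{\bar b})$ reduces to density of $\operatorname{Ran}(I - T_bT_{\bar b})^{1/2}$ in $\mathcal H(b)$, i.e. to $\ker(I-T_bT_{\bar b})^{1/2} = \ker(I - T_bT_{\bar b}) = \{0\}$. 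But $(I - T_bT_{\bar b})f = 0$ means $\|T_{\bar b}f\|_2 = \|f\|_2$, forcing $\|H_{\bar b}f\|=0$ and $\|b f\|_2 = \|f\|_2$ a.e., hence $(1-|b|^2)|f|^2 = 0$ a.e.; if $f\not\equiv 0$ then $f\ne 0$ a.e. on $\T$, so $|b| = 1$ a.e. on a set of positive measure and in fact a.e., which by the log-integral criterion is exactly the failure of extremality (indeed forces $b$ inner). Thus extreme $\Rightarrow \ker(I - T_bT_{\bar b}) = \{0\}$ $\Rightarrow$ density.

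The main obstacle I anticipate is not any single deep step but getting the conjugation algebra exactly right: keeping straight whether $CH_{\bar b}$ is being viewed as a map into $H^2$ or into $\overline{H^2_0}$, correctly transporting the operator identity $T_bT_{\bar b} + H_{\bar b}^*H_{\bar b} = T_{|b|^2}$ through the anti-linear $C$ (which reverses products and conjugates scalars), and matching the $\|\cdot\|_b$ range norm on $\mathcal H(b)$ with the $L^2$ norm under the square-root operator so that "range inclusion" is genuinely a statement about the Hilbert space $\mathcal{H}(b)$ and not just set-theoretic containment. A secondary point requiring a little care is the standard fact $\operatorname{Ran}(A) = \operatorname{Ran}((AA^*)^{1/2})$ with matching norms, which is what lets me pass from $CH_{\bar b}H_{\bar b}^*C = I - T_bT_{\bar b}$ to the clean description of $\operatorname{Ran}(CH_{\bar b})$; I would cite this from the de Branges–Rovnyak literature (e.g. \cite{S}, \cite{FM}) rather than reprove it.
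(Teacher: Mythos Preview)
Your central operator identity is wrong. Conjugating by $C$ gives $CH_{\bar b}H_{\bar b}^*C = H_{\bar b}^*H_{\bar b}$ (apply $CH_\psi = H_\psi^*C$ twice), and by the standard relation $T_bT_{\bar b} + H_{\bar b}^*H_{\bar b} = T_{|b|^2}$ (which you yourself quote) this equals $T_{|b|^2} - T_bT_{\bar b}$, \emph{not} $I - T_bT_{\bar b}$; the two coincide only when $|b|=1$ a.e., i.e.\ when $b$ is inner. With the correct identity the first part can be salvaged: since $|b|\le 1$ gives $T_{|b|^2}\le I$, one has $H_{\bar b}^*H_{\bar b}\le I - T_bT_{\bar b}$, and Douglas's range-inclusion lemma then yields $\imm(CH_{\bar b})=\imm\big((H_{\bar b}^*H_{\bar b})^{1/2}\big)\subset \imm\big((I-T_bT_{\bar b})^{1/2}\big)=\H(b)$ once the anti-linearity is handled (indeed $C(H_{\bar b}H_{\bar b}^*)^{1/2}C=(H_{\bar b}^*H_{\bar b})^{1/2}$, so $C(\imm H_{\bar b})=\imm((H_{\bar b}^*H_{\bar b})^{1/2})$). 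But your density argument collapses: you would now need $\imm\big((T_{|b|^2}-T_bT_{\bar b})^{1/2}\big)$ dense in $\H(b)$, which is not what $\kerr(I-T_bT_{\bar b})=\{0\}$ gives you. Even on its own terms your reduction is confused: $\imm((I-T_bT_{\bar b})^{1/2})$ \emph{is} $\H(b)$ by definition, so ``density of this range in $\H(b)$'' is vacuous; and your kernel computation only shows that a nontrivial kernel forces $b$ to be inner, which is an extreme case you would still have to treat separately.

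For comparison, the paper avoids these operator-theoretic pitfalls entirely. For the inclusion it uses the criterion $g\in\H(b)\Leftrightarrow T_{\bar b}g\in\H(\bar b)$: writing $g=CH_{\bar b}f=P_+(b\bar z\bar f)$, a short inner-product computation gives $T_{\bar b}g=-P_+\big((1-|b|^2)\bar z\bar f\big)$, which lies in $\H(\bar b)$ because $\H(\bar b)$ is exactly the set of Riesz projections of functions in $L^2(1-|b|^2)$. For density it computes explicitly that $CH_{\bar b}k_\lambda=Q_\lambda^b:=\dfrac{b(z)-b(\lambda)}{z-\lambda}$ and then invokes the known fact that the difference quotients $\{Q_\lambda^b:\lambda\in\D\}$ are complete in $\H(b)$ precisely when $b$ is extreme.
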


\begin{proof}
To prove the first statement we use the following characterization of  the space $\H(b)$ (see \cite[I-8]{S}, \cite[Thm. 17.\,8]{FM}): if   $g\in H^2$, then
\[
g\in\H(b) \iff T_{\bar{b}}g\in \H(\bar{b})
\]
and for $ g\in\H(b)$,
\begin{equation}
\|g\|^2_b=\|g\|^2_2+\|T_{\bar{b}}g\|^2_{\bar{b}}.\label{norm}
\end{equation}
For  $f\in H^2$ set
\[
g=CH_{\bar b}f=CP_-(\bar{b}f)=P_+C(\bar{b}f)=P_+(b\bar{z}\bar{f}).
\]

Then   for any $h\in H^2$,
\begin{align*}
\langle T_{\bar{b}}g,h\rangle&=\langle T_{\bar{b}}P_+(b\bar{z}\bar{f}),h\rangle=\left\langle P_+\left(\bar{b}P_+(b\bar{z}\bar{f})\right),h\right\rangle=
\langle \bar{b}P_+(b\bar{z}\bar{f}),h\rangle\\
\noalign{\sk}
&=\langle P_+(b\bar{z}\bar{f}),bh\rangle
=\langle b\bar{z}\bar{f},bh\rangle=\langle |b|^2\bar{z}\bar{f},h\rangle=\left\langle P_+\left(|b|^2\bar{z}\bar{f}\right),h\right\rangle
\\
\noalign{\sk}
&=-\left\langle P_+\left((1-|b|^2)\bar{z}\bar{f}\right),h\right\rangle+\langle P_+(\bar{z}\bar{f}),h\rangle=
-\left\langle P_+\left((1-|b|^2)\bar{z}\bar{f}\right),h\right\rangle.
\end{align*}
Thus
\[
T_{\bar{b}}g=-P_+\left((1-|b|^2)\bar{z}\bar{f}\right).
\]

Since  the space $\H(\bar b)$  consists of  the Riesz projections of functions from the weighted space $L^2(1-|b|^2)$  of measurable functions $h$ on  $ \T$ such that $\int_{\T}(1-|b(e^{i\theta})|^2)|h(e^{i\theta})|^2d\theta<\infty$ (\cite[Thm. 25.1, Cor. 25.2]{FM},  \cite[III-2 ]{S}), we see that $T_{\bar g}f\in\H(\bar{b})$ and  our claim is proved.

Now assume that $b$ is extreme. If $b$ is an inner function, then by (\ref {range}),  $CH_{\bar{b}}(H^2)=K_b=\H(b)$.

Now we want to identify the images of  $k_\lambda(z)=(1-\bar{\lambda}z)^{-1}$, the reproducing kernels for $H^2$, under $CH_{\bar b}$.  We have
\begin{align*}
\left(CH_{\bar{b}}k_\lambda\right)(z)&=C\left(\overline{b(z)}k_\lambda(z)-P_+(\bar{b}k_\lambda)(z)\right)=C\left(\overline{b(z)}k_\lambda(z)-\overline{b(\lambda)}k_\lambda(z)\right)\\
\noalign{\sk}
&=C\left(\frac{\overline{b(z)}-\overline{b(\lambda)}}{1-\bar{\lambda}z}\right).
\end{align*}
Thus for $|z|=1$,
\begin{equation}\label{four}
\left(CH_{\bar{b}}k_\lambda\right)(z)=C\left(\frac{\bar{z}\left(\overline{b(z)}-\overline{b(\lambda)}\right)}{\bar{z}-\bar{\lambda}}\right)=\dfrac {b(z)-b(\lambda)}{z-\lambda}=Q^b_\lambda(z).
\end{equation}
It is known that in the case $b$ is extreme, the family of the difference quotients $\{Q^b_\lambda: \lambda \in\D\}$  is complete in  $ \H ( b)$ (\cite[Cor. 26.\,18]{FM}), that is
\[\H(b)=\spann\{Q^b_\lambda \colon\,\lambda\in\D\},
\]
where the closure is taken in $\H(b).$

\sk

Now assume that $h\in \H(b)$. Then
$h=\lim\limits_{n\to\infty}h_n$, where $h_n=\alpha_1Q^b_{\lambda_1}+\ldots+\alpha_nQ^b_{\lambda_n}$,    $\alpha_k\in \mathbb C$, $k=1,\dots, n$.
Since  in view of (\ref{four}), $h_n=CH_{\bar{b}}(f_n)$, where $f_n=\alpha_1k_{\lambda_1}+\ldots+\alpha_nk_{\lambda_n}$ we have
$h=\lim\limits_{n\to\infty}CH_{\bar{b}}(f_n)$. This proves the density  of $\imm CH_{\bar b}$ in $\H(b)$.
\end{proof}

\section{Schmidt spaces of Hankel operators}

 For a singular value $s$ of $H_{\psi}$,  a pair of functions $ (f,g)$, $f\in H^2$, $g\in \overline{H^2_0}$, is called an \emph{$s$-Schmidt pair} or  (\emph{$s$-pair}) of $H_{\psi}$ if  $H_{\psi}f=sg$ and  $H_{\psi}^{\ast}g=sf$.
We note that $f\in E_s^+(H_{\psi})$ if and only if $\left(f,\frac{1}{s}H_{\psi}f\right)$ is an $s$-Schmidt pair.

It is known that if $(f,g)$ is an $s$-pair, then $\psi_s=\frac{g}{f}$ is a unimodular function that does not depend on  the choice of an $s$-pair $(f,g)$ (\cite{P}, \cite{N1}).

It is also defined $E_s^-(H_{\psi})$ as the kernel  of $H_{\psi}H_{\psi}^{\ast}- s^2I$. Clearly $(f,g)$ is an $s$-pair of $H_{\psi}$ if and only if $(g,f)$ is an $s$-pair of $H_{\psi}^{\ast}$.
We also  note that the conjugation $C$ maps $E_s^+(H_{\psi})$ onto $E_s^-(H_{\psi})$. Indeed, since $CH_{\psi}= H_{\psi}^{\ast}C$ ,
\[
H_{\psi}^{\ast}H_{\psi}f= H_{\psi}^{\ast}C\cdot CH_{\psi}f= CH_{\psi}CH_{\psi}f=CH_{\psi}H_{\psi}^{\ast}Cf,\quad  f\in H^2,
\]
we see that
\[
H_{\psi}^{\ast}H_{\psi}f=s^2 f\iff CH_{\psi}H_{\psi}^{\ast}Cf=s^2f \iff H_{\psi}H_{\psi}^{\ast}Cf=s^2Cf.
\]

The following theorem characterizes the Schmidt space in the case when $\|H_{\psi}\|=s$.

\begin{theorem}
Assume that $s=\|H_\psi\|>0$ is a singular value of $H_{\psi}$, $\psi \in L^{\infty}$. Then
\[
\kerr(H_\psi^\ast H_\psi-s^2I)=\kerr T_\varphi,
\]
where $\varphi=\frac{H_\psi f}{f}$ and  $f$ is any nonzero function in $\kerr(H_\psi^\ast H_\psi-s^2I)$.
\end{theorem}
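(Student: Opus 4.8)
The plan is to reduce the statement to the case where the symbol has constant modulus. First I would record what is already available: since $f$ is a nonzero vector in $\kerr(H_\psi^\ast H_\psi-s^2I)$, the pair $\bigl(f,\frac{1}{s}H_\psi f\bigr)$ is an $s$-Schmidt pair, so by the rigidity of the associated unimodular symbol (\cite{P}, \cite{N1}) the function $\varphi=H_\psi f/f$ equals $s\,\psi_s$, does not depend on the choice of $f$, and satisfies $|\varphi|=s$ a.e.\ on $\T$. The one genuinely non-formal input is the Adamyan--Arov--Krein description of the best $H^\infty$-approximant: because $s=\|H_\psi\|$ and $s$ is a singular value, this theory (\cite{P}, \cite{N1}) produces $q\in H^\infty$ with $\psi-\varphi=q$. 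Since $H_q=0$, this gives $H_\psi=H_\varphi$, hence also $H_\psi^\ast H_\psi=H_\varphi^\ast H_\varphi$.

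Granting this, the rest is an orthogonality computation with no hidden difficulty. For every $h\in H^2$, the decomposition $L^2=H^2\oplus\overline{H_0^2}$ yields $\varphi h=P_+(\varphi h)\oplus P_-(\varphi h)=T_\varphi h\oplus H_\varphi h$, so that
\[
s^2\|h\|^2=\|\varphi h\|^2=\|T_\varphi h\|^2+\|H_\psi h\|^2 .
\]
Since $s=\|H_\psi\|$, the operator $s^2I-H_\psi^\ast H_\psi$ is positive, whence $\|H_\psi h\|=s\|h\|$ if and only if $\langle(s^2I-H_\psi^\ast H_\psi)h,h\rangle=0$, i.e.\ if and only if $h\in\kerr(H_\psi^\ast H_\psi-s^2I)$. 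Comparing this with the display above, membership in $\kerr(H_\psi^\ast H_\psi-s^2I)$ is equivalent to $T_\varphi h=0$, which is the asserted equality $\kerr(H_\psi^\ast H_\psi-s^2I)=\kerr T_\varphi$.

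I would also note the easy half separately: if $h$ lies in the Schmidt space then $\bigl(h,\frac{1}{s}H_\psi h\bigr)$ is an $s$-pair, so $H_\psi h=s\psi_s h=\varphi h\in\overline{H_0^2}$ and therefore $T_\varphi h=P_+(\varphi h)=0$; thus $\kerr(H_\psi^\ast H_\psi-s^2I)\subseteq\kerr T_\varphi$ needs nothing beyond the rigidity of $\psi_s$. It is the reverse inclusion for which the hypothesis $s=\|H_\psi\|$ is indispensable, and I expect the only delicate step of the write-up to be exactly the invocation that the best approximant $q\in H^\infty$ exists and that $\psi-q$ has constant modulus $s$ (for a non-maximal singular value this fails, and $E^+_s(H_\psi)$ is then merely nearly $S^\ast$-invariant). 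Everything after that point is formal.
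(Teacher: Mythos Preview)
Your proposal is correct and follows essentially the same route as the paper: both invoke the Nehari/AAK attainment result (the paper cites \cite[Theorem 11.5]{FM} and \cite[Theorem 1.4]{P}) to replace $\psi$ by the constant-modulus symbol $\varphi$, and both use positivity of $s^2I-H_\psi^\ast H_\psi$ to pass between $\|H_\psi h\|=s\|h\|$ and $h\in\kerr(H_\psi^\ast H_\psi-s^2I)$. The only cosmetic difference is that the paper records the operator identity $s^2I-H_\varphi^\ast H_\varphi=T_{\bar\varphi}T_\varphi$ and then uses $\kerr T_{\bar\varphi}T_\varphi=\kerr T_\varphi$, whereas you use the equivalent Pythagorean norm identity $s^2\|h\|^2=\|T_\varphi h\|^2+\|H_\varphi h\|^2$; these are the same computation read at the operator versus quadratic-form level.
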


\begin{proof}
We start the proof with the observation that  under  the assumption $\|H_\psi\|=s>0$ the following conditions are equivalent for $f\in H^2$:
\begin{enumerate}
\item[{\rm(i)}]
$f\in\kerr(H_{\psi}^\ast H_{\psi}-s^2I)$;
\item[{\rm(ii)}]
$\|H_\psi f\|=s\|f\|$.
\end{enumerate}
Indeed, if $f\in\kerr(H_{\psi}^\ast H_{\psi}-s^2I)$, then
\[
\|H^\ast_\psi H_\psi f\|=s^2 \|f\|,
\]
which implies
\[
 s^2 \|f\|\leq \|H_{\psi}^\ast\|\|H_{\psi}f\|=s\|H_{\psi}f\|.
\]
Since
\[
\|H_{\psi}f\|\leq s\|f\|,
\]  (ii) holds true.

To prove the other implication it enough to notice that the operator $s^2I- H_{\psi}^\ast H_{\psi}$ is positive, or in other words
  for any $g\in H^2$,
\[
\langle (s^2I- H_{\psi}^\ast H_\psi )g,g\rangle\geq 0
\]
 and (ii) implies that
\begin{equation}
\langle (s^2I-H_\psi^{\ast}H_\psi) f,f\rangle=0.\label{iii}
\end{equation}
Now, by  the Cauchy-Schwarz inequality,
\[
|\langle (s^2I-H_\psi^{\ast}H_\psi) f,g\rangle|^2\leq \langle (s^2I-H_\psi^{\ast}H_\psi) f,f\rangle \langle(s^2I-H_\psi^{\ast}H_\psi)g ,g\rangle,\quad g\in H^2,
\]
which together with (\ref{iii}) gives
\[
(s^2I-H_\psi^{\ast}H_\psi) f=0.
\]

Thus,  $f\in\kerr(H_\psi^\ast H_\psi-s^2I)$ with  $s=\|H_{\psi}\|$, if and only if the Hankel operator $H_{\psi}$ attains its norm on the unit ball of $H^2$ at $f/\|f\|$ .
Moreover,  we know that $\left(f, \frac{1}{s}H_{\psi} f\right)$ is an $s$-pair for $H_{\psi}$ and, consequently, $|\varphi|=s$ a.e. on $\T$.
Furthermore by \cite [Theorem 11.5]{FM} (or  by \cite [Theorem 1.4] {P}),  $ H_\psi =H_\varphi $.
Thus
\[
H_\psi^\ast H_\psi-s^2I=H_{\varphi}^\ast H_{\varphi}-s^2I=T_{\bar{\varphi}}H_\varphi-s^2I=T_{\bar{\varphi}}T_\varphi.
\]
Indeed, for  $f\in H^2$, we get
\[
T_{\bar{\varphi}}(\varphi f-P_+(\varphi f))-s^2f=P_+(s^2f)+T_{\bar{\varphi}}T_\varphi f-s^2f=T_{\bar{\varphi}}T_\varphi f.
\]
Since $\kerr (T_{\bar{\varphi}}T_\varphi)=\kerr T_\varphi$, the claim is proved.
\end{proof}

For  a general case we obtain the following:
 \begin{theorem}
For  $\psi\in L^{\infty} $ let  $H_{\psi}$ be a Hankel operator defined by (\ref{hankel}). Let $s>0$ be a singular value for $H_{\psi}$.
 Then there exist an inner function $\theta$ and $\varphi\in L^{\infty}$ such that
 \[
E^+_{s}(H_{\psi})\subset \theta\kerr T_{\varphi}\subset E^+_{s}(H_{s\psi_s}),
\] where $\psi_s$ is defined at the beginning of this section.
\end{theorem}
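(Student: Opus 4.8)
The plan is to exhibit $\theta$ and $\varphi$ explicitly and then verify the two inclusions by two short direct computations, using nothing beyond the facts recalled at the beginning of this section. The first ingredient is that $E^+_s(H_\psi)$ always sits inside a single Toeplitz kernel. If $f\in E^+_s(H_\psi)$, $f\neq 0$, then $\bigl(f,\tfrac1s H_\psi f\bigr)$ is an $s$-pair, and since the unimodular function $\psi_s=g/f$ is the \emph{same} for every $s$-pair $(f,g)$, we get $H_\psi f=s\psi_s f$; as $H_\psi f\in\overline{H_0^2}$ this forces $\psi_s f\in\overline{H_0^2}$, i.e. $P_+(\psi_s f)=0$, i.e. $f\in\kerr T_{\psi_s}$. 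Thus $E^+_s(H_\psi)\subset\kerr T_{\psi_s}$. The second ingredient is the reverse type of inclusion on the other side: if $P_+(\psi_s f)=0$ then $s\psi_s f\in\overline{H_0^2}$, so $H_{s\psi_s}f=P_-(s\psi_s f)=s\psi_s f$, and hence
\[
H_{s\psi_s}^{\ast}H_{s\psi_s}f=P_+\bigl(\overline{s\psi_s}\cdot s\psi_s f\bigr)=s^2f ,
\]
so that $\kerr T_{\psi_s}\subset E^+_s(H_{s\psi_s})$ (the two in fact coincide, just as in the proof of the previous theorem, since $|s\psi_s|=s$ a.e., but only the inclusion is needed). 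Together these already prove the statement with $\theta=1$.

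To sharpen this I would take $\theta$ to be the greatest common inner divisor of the nonzero functions in $E^+_s(H_\psi)$ (the space is nonzero because $s$ is a singular value), so that $E^+_s(H_\psi)\subset\theta H^2$, and put $\varphi=\theta\psi_s$, which is unimodular and hence lies in $L^\infty$. The one computational point is the identity
\[
\theta\,\kerr T_{\theta\psi_s}=\theta H^2\cap\kerr T_{\psi_s},
\]
which is immediate: writing a general element of $\theta\,\kerr T_{\theta\psi_s}$ as $f=\theta g$ with $g\in H^2$ one has $\theta\psi_s g=\psi_s f$, so $P_+(\theta\psi_s g)=0$ is equivalent to $P_+(\psi_s f)=0$; both sides are closed, because multiplication by the inner function $\theta$ is an isometry of $H^2$. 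Now the left inclusion of the theorem follows because $E^+_s(H_\psi)$ is contained both in $\theta H^2$ (choice of $\theta$) and in $\kerr T_{\psi_s}$ (first paragraph), hence in their intersection $\theta\,\kerr T_\varphi$; and the right inclusion follows from $\theta H^2\cap\kerr T_{\psi_s}\subset\kerr T_{\psi_s}\subset E^+_s(H_{s\psi_s})$.

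I do not expect a serious obstacle: the argument is essentially bookkeeping around two elementary identities. The one place that needs care is the step $H_\psi f=s\psi_s f$ in the first paragraph — it uses the quoted fact that $\psi_s$ is one and the same unimodular function for all $s$-pairs, so that this relation holds for \emph{every} $f\in E^+_s(H_\psi)$ and not just for the distinguished function defining $\psi_s$. It is also worth noting that the case $\theta=1$ already yields a statement with no hypotheses at all, so the real content of the theorem is the refinement obtained by extracting the common inner divisor, which is exactly what is needed in the analysis leading to Theorem~5.
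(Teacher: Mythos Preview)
Your argument is correct, and it is somewhat more direct than the paper's. The difference lies chiefly in how $\theta$ is chosen and how the first inclusion is established.

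The paper does not go through $\kerr T_{\psi_s}$ at all. Instead it first shows that every $s$-pair of $H_\psi$ is an $s$-pair of $H_{s\psi_s}$, whence $H_{\psi-s\psi_s}$ annihilates $E^+_s(H_\psi)$; since the kernel of a Hankel operator is $S$-invariant, this produces an inner function $\theta$ with $\kerr H_{\psi-s\psi_s}=\theta H^2\supset E^+_s(H_\psi)$. That particular $\theta$ then satisfies $H_{\theta\psi}=H_{s\theta\psi_s}$ on all of $H^2$, and the paper combines this with Lemma~1 (the transfer $\theta f\in E^+_s(H_\psi)\Rightarrow f\in E^+_s(H_{\theta\psi})$) and the Theorem~2 reasoning to land in $\kerr T_{\theta\psi_s}$. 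The second inclusion is then checked by the same direct computation you give.

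Your route bypasses Lemma~1 and the Hankel-kernel step entirely: the single observation $H_\psi f=s\psi_s f$ already places $E^+_s(H_\psi)$ inside $\kerr T_{\psi_s}$, and the identity $\theta\,\kerr T_{\theta\psi_s}=\theta H^2\cap\kerr T_{\psi_s}$ does the rest for \emph{any} inner $\theta$ with $E^+_s(H_\psi)\subset\theta H^2$. Taking $\theta$ to be the greatest common inner divisor gives the tightest such sandwich; the paper's $\theta$ (coming from $\kerr H_{\psi-s\psi_s}$) may be a proper multiple of yours, but carries the additional structural information $H_{\theta\psi}=H_{s\theta\psi_s}$, which is in the spirit of the Adamyan--Arov--Krein machinery the authors invoke. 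For the bare existence statement of the theorem your argument is cleaner; the paper's choice of $\theta$ is what one would want if the goal were to compare $H_\psi$ with its unimodular truncation $H_{s\psi_s}$.
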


In the proof of this theorem we apply some ideas coming from the proof of the fundamental Adamyan-Arov-Krein Theorem \cite {AAK} that states that the best  rank $n$ approximation of the Hankel operator $H_{\psi}$ is the same as its best approximation by rank $n$ Hankel operators (see also \cite{P}, \cite{N1}).

The following lemma is actually contained in Lemma 7.2.6 in \cite{N1} but we include its short proof for the reader's convenience.

\begin{lemma}
Assume that $\theta$ is an inner function and $f\in H^2$ are such that $f\theta\in E^+_{s}(H_{\psi})$.
Then $f\in E^+_{s}(H_{\theta\psi})$.
\end{lemma}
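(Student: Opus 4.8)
The plan is to work directly with the defining relation for membership in a Schmidt subspace, namely that $g \in E^+_s(H_\chi)$ exactly when $\|H_\chi g\| = s\|g\|$ together with $H_\chi^\ast H_\chi g = s^2 g$; equivalently, recalling the argument in the proof of Theorem 3, that $(g, \tfrac1s H_\chi g)$ is an $s$-Schmidt pair. So I start from the hypothesis that $(f\theta, \tfrac1s H_\psi(f\theta))$ is an $s$-pair of $H_\psi$, and I want to produce from it an $s$-pair of $H_{\theta\psi}$ whose first component is $f$.

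First I would compute $H_{\theta\psi} f$ and relate it to $H_\psi(f\theta)$. The key algebraic observation is that multiplication by the inner function $\theta$ interacts well with the projections: since $\theta$ is inner, $\theta H^2 \subset H^2$, so $P_-(\theta\psi f) = P_-(\psi \cdot \theta f)$, i.e. $H_{\theta\psi} f = H_\psi(\theta f)$. Thus if $(f\theta, g)$ is an $s$-pair of $H_\psi$, with $g = \tfrac1s H_\psi(f\theta) = \tfrac1s H_{\theta\psi} f \in \overline{H^2_0}$, then already $H_{\theta\psi} f = s g$, which is the first of the two $s$-pair equations. It remains to check $H_{\theta\psi}^\ast g = s f$. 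For this I would use the adjoint identity: for $h \in \overline{H^2_0}$ one has $H_\chi^\ast h = P_+(\bar\chi h)$ on the appropriate dense set, so $H_{\theta\psi}^\ast g = P_+(\bar\theta\bar\psi g)$. I want to compare this with $H_\psi^\ast g = P_+(\bar\psi g) = s f\theta$ (the second equation of the original $s$-pair). Multiplying $H_\psi^\ast g = s f \theta$ by $\bar\theta$ gives $\bar\theta P_+(\bar\psi g) = s f$; the point is then that $\bar\theta P_+(\bar\psi g) = P_+(\bar\theta \bar\psi g)$, which holds because $P_-(\bar\psi g) \in \overline{H^2_0} \subset \overline{H^2}$ and multiplying an element of $\overline{H^2}$ by the anti-analytic $\bar\theta$ keeps it in $\overline{H^2}$, so it is annihilated by $P_+$. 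Hence $H_{\theta\psi}^\ast g = s f$, completing the verification that $(f, g)$ is an $s$-pair of $H_{\theta\psi}$, and therefore $f \in E^+_s(H_{\theta\psi})$.

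The main thing to be careful about — and the step I expect to require the most attention — is the bookkeeping on which functions the formal identities $H_\chi^\ast h = P_+(\bar\chi h)$ and $P_-(\theta \cdot) = \theta P_-(\cdot)$ literally hold, since $\psi$ is only in $L^\infty$ and the natural domains are dense subsets (polynomials, or $\overline{H^2_0}$-side analogues). I would either verify the identities on a dense set and pass to the limit using boundedness of $H_\psi$ and of multiplication by the inner function $\theta$, or invoke directly the conjugation relation $CH_\chi = H_\chi^\ast C$ together with $C(\theta \cdot) = \bar\theta C(\cdot)$ to transfer everything to the $H^2$ side where the manipulations are cleaner. Once the domain issues are handled, the proof is just the two short computations above, and the inclusion $f \in E^+_s(H_{\theta\psi})$ follows; note we do not even need $\|H_{\theta\psi}\| = s$, only that $s$ is a singular value realized by the pair $(f,g)$, which is automatic from the construction.
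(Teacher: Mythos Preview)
Your argument is correct and rests on the same two facts the paper uses: the identity $H_{\theta\psi}f = H_\psi(\theta f)$ and the fact that $\bar\theta$ maps $\overline{H_0^2}$ into itself (equivalently, that one can move $\bar\theta$ across the inner product to land on $\theta g \in H^2$). The paper packages this as a single inner-product chain showing $\langle H_{\theta\psi}^\ast H_{\theta\psi} f, g\rangle = \langle H_\psi^\ast H_\psi(\theta f), \theta g\rangle = s^2\langle f,g\rangle$ for all $g\in H^2$, whereas you split the computation into the two Schmidt-pair equations; the algebra is identical, and your worry about domains is unnecessary since $\psi\in L^\infty$ makes all operators bounded and the identities hold on all of $H^2$.
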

\begin{proof} If
\[ H_\psi^\ast H_\psi(\theta f)=s^2\theta f,
\]
then for any $g\in H^2$,
\begin{align*}  \langle H_{\theta \psi}^\ast H_{\theta \psi} f,g\rangle &= \langle\bar \theta \bar \psi P_{-}(\theta \psi f),g\rangle= \langle \bar\psi P_{-}(\theta \psi f),\theta g\rangle\\&=
\langle P_{+} (\bar \psi P_{-}(\psi\theta f)),\theta g\rangle= \langle H_{ \psi}^\ast H_{ \psi}(\theta f),\theta g\rangle=   \langle s^2\theta f,\theta g \rangle = \langle s^2 f,g \rangle.\end{align*}
\end{proof}

\begin{proof}[Proof of Theorem 3] We first show that
\[
E^+_{s}(H_{\psi})\subset \theta\kerr T_{\varphi}.
\]
First note that   if   $\psi_s$ is as specified above, then the Schmidt space  $E^+_s(H_{\psi})$  is a subspace of the Schmidt space  $E^+_s(H_{s\psi_s})$.
We observe that if $(f,g)$ is an $s$-pair for $ H_{\psi}$, then
\[
H_{s\psi_s}f = P_{-}\left(s\frac  g f f\right)= sP_{-}(g)=sg
\]
and since $|\psi_s| =1$ a.e. on $\T$,
\[
H_{s\psi_s}^{\ast}g= P_{+}\left(s\bar{\psi_s}g\right)= sP_{+}\left(\frac fg g\right)= sf.
\]
This shows that any   $s$-pair of $H_{\psi}$ is an $s$-pair of $H_{s\psi_s}$, which proves  the inclusion of the corresponding Schmidt spaces.
Hence
\[
H_{\psi}f=H_{s\psi_s}f\qquad \text{for} \ \ f\in  E^+_s(H_{\psi})
\]
or equivalently,
\[
(H_{\psi}-H_{s\psi_s})f= H_{\psi- s\psi_s}f=0.
\]
Since the kernel of a Hankel operator is the $S$-invariant subspace of $H^2$, $\kerr  H_{\psi- s\psi_s}= \theta H^2$, for some inner function $\theta$. This also means that $E^+_s(H_{\psi})\subset \theta H^2$.
Moreover, for   $ f\in  H^2$    the following  equalities hold
\begin{equation}
H_{\theta \psi}f=H_{\psi}\theta f=H_{s\psi_s}\theta f= H_{s\theta\psi_s}f.\label{theta}
\end{equation}
These equalities and Lemma 1 yield
\[
\theta f\in E^+_s(H_{\psi})\implies f\ \in E^+_s(H_{\theta \psi})= E^+_s(H_{\theta s\psi_s}).
\]
Now,  the reasoning analogous to that used in the proof of Theorem 2 gives
\[
\kerr (H_{s\theta \psi_s}^{\ast}H_{s\theta\psi_s}-s^2I)= \kerr T_{\theta \psi_s}.
\]

To prove the other inclusion assume that   $h\in  \kerr T_{\theta \psi_s}$.
Then $h\theta\psi_s\in \overline {H^2_0}$, and consequently,

\[
H_{ s\psi_s}(h\theta)= P_{-}(s\psi_sh\theta)=sh\theta\psi_s.
\]
Since $\psi_s$ is a unimodular function,  we get
\[
H_{s\psi_s}^{\ast}H_{s\psi_s}(\theta h)= s^2P_{+}(\bar{\psi_s}h\theta\psi_s)= s^2\theta h,
\]
which proves that  $\theta \kerr T_{\varphi}\subset E^+_{s}(H_{s\psi_s})$, where $\varphi=\theta\psi_s$.
\end{proof}

In our next theorem we  prove that if the space $ E^+_s(H_{\psi})$ is not orthogonal to the one-dimensional subspace of constant functions ($ E^+_s(H_{\psi})\not\perp 1$), then it  is nearly $S^\ast$-invariant.
In view of our remark in the Introduction this theorem is actually a restatement of the result obtained
in \cite{GP} and \cite {GP1} for anti-linear Hankel operators.
We present its proof in our setting in connection with Theorem 5 which describes a general  case of Schmidt subspaces.

\begin{theorem}  For  $\psi\in L^{\infty} $ let  $H_{\psi}$ be a Hankel operator defined by (\ref{hankel}). If  $s>0$ is  a singular value for $H_{\psi}$ and $E^+_s(H_{\psi})\not\perp 1$, then $E^+_s(H_{\psi})$ is a nearly $S^\ast$-invariant subspace of $H^2$.
\end{theorem}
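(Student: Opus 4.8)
The plan is to verify directly the defining implication of near $S^*$-invariance: given $f\in E^+_s(H_\psi)$ with $f(0)=0$, show that $S^*f=\bar z f$ again lies in $E^+_s(H_\psi)=\kerr(H_\psi^*H_\psi-s^2I)$. Since this kernel is closed and, $s$ being a singular value, nontrivial, that implication proves the theorem.

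First I would compute $H_\psi^*H_\psi(\bar z f)$ directly, using that $H_\psi^*w=P_+(\bar\psi w)$ for $w\in\overline{H_0^2}$ together with the splitting $\psi f=a+v$, where $a=P_+(\psi f)$ and $v=P_-(\psi f)=H_\psi f$. After multiplying by $\bar z$, the only negative-frequency contribution of $\bar z a$ is $a(0)\bar z$, while $\bar z v\in\overline{H_0^2}$, so $P_-(\psi\bar z f)=a(0)\bar z+\bar z v$. Applying $P_+\bar\psi$ and using $P_+(\bar\psi v)=H_\psi^*H_\psi f=s^2f$ together with $P_+(\bar z f)=\bar z f$ (valid since $f(0)=0$), I expect to reach
\[
H_\psi^*H_\psi(\bar z f)=s^2\bar z f+\langle\psi f,1\rangle\,P_+(\overline{z\psi}),
\]
where $\langle\psi f,1\rangle=a(0)$. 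Everything then reduces to the claim that $\langle\psi f,1\rangle=0$ for every $f\in E^+_s(H_\psi)$ with $f(0)=0$.

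To prove that claim I would invoke the inner function produced in the proof of Theorem 3. As shown there, each $f\in E^+_s(H_\psi)$ satisfies $H_{\psi-s\psi_s}f=0$, so $E^+_s(H_\psi)\subseteq\kerr H_{\psi-s\psi_s}$; the kernel of a Hankel operator is $S$-invariant, hence equals $\theta H^2$ for some inner $\theta$, and it is nonzero since $E^+_s(H_\psi)\neq\{0\}$. The hypothesis $E^+_s(H_\psi)\not\perp 1$ forces $\theta(0)\neq0$, for otherwise every element of $\theta H^2$ would vanish at $0$. Writing $\chi=\psi-s\psi_s$ and applying the kernel relation to $\theta\in\theta H^2$ gives $\chi\theta\in H^2$; being bounded, $\chi\theta=:\eta\in H^\infty$, and hence $\chi=\bar\theta\eta$ a.e.\ on $\T$. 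Now if $f\in E^+_s(H_\psi)$ with $f(0)=0$, write $f=\theta\tilde f$ with $\tilde f\in H^2$; then $\theta(0)\tilde f(0)=0$ with $\theta(0)\neq0$ gives $\tilde f(0)=0$. Finally $\psi f=\chi f+s\psi_s f$, where $s\psi_s f=H_\psi f\in\overline{H_0^2}$ is orthogonal to $1$ and $\chi f=\bar\theta\eta\,\theta\tilde f=\eta\tilde f\in H^2$, so $\langle\psi f,1\rangle=\langle\chi f,1\rangle=(\eta\tilde f)(0)=\eta(0)\tilde f(0)=0$. Together with the displayed identity this yields $H_\psi^*H_\psi(\bar z f)=s^2\bar z f$, that is, $\bar z f\in E^+_s(H_\psi)$.

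The main obstacle I anticipate is purely bookkeeping: the identity for $H_\psi^*H_\psi(\bar z f)$ demands careful tracking of which summands, after multiplication by $\bar z$, land in $H^2$ and which in $\overline{H_0^2}$, and the hypothesis $f(0)=0$ must be used at exactly the right place. The conceptual core is short — reduce to $\langle\psi f,1\rangle=0$, then observe that the inner divisor $\theta$ of $E^+_s(H_\psi)$ from Theorem 3 satisfies $\theta(0)\neq0$ precisely because $E^+_s(H_\psi)\not\perp 1$, which is the only place the hypothesis enters.
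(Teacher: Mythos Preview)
Your argument is correct. Both you and the paper reduce the problem to the same identity
\[
H_\psi^\ast H_\psi(S^\ast f)=s^2 S^\ast f+\langle\psi f,1\rangle\,P_+(\overline{z\psi})
\]
for $f\in E_s^+(H_\psi)$ with $f(0)=0$, and hence to the claim $\langle\psi f,1\rangle=0$. The paper reaches the identity via a general commutation relation $S^\ast H_\psi^\ast H_\psi S=H_\psi^\ast H_\psi-\langle z\psi\,\cdot\,,1\rangle P_+(\bar\psi\bar z)$, while you compute $H_\psi^\ast H_\psi(\bar z f)$ directly; the two computations are essentially equivalent.

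The genuine divergence is in how $\langle\psi f,1\rangle=0$ is established. The paper stays internal to $E_s^+(H_\psi)$: it pairs the commutator identity against elements $h\in E_s^+(H_\psi)$, then uses the conjugation $C$ to exhibit $h_1=CH_\psi h\in E_s^+(H_\psi)$ with $\langle P_+(\bar\psi\bar z),h_1\rangle=s^2\overline{h(0)}\neq 0$, forcing the scalar factor to vanish. Your route instead imports the inner function $\theta$ from Theorem~3: since $E_s^+(H_\psi)\subset\theta H^2$ and $E_s^+(H_\psi)\not\perp 1$, one has $\theta(0)\neq 0$; then the factorization $\psi-s\psi_s=\bar\theta\eta$ with $\eta\in H^\infty$ gives $\langle\psi f,1\rangle=(\eta\tilde f)(0)=0$ because $\tilde f(0)=0$. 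Your argument is clean and makes the role of the hypothesis $E_s^+(H_\psi)\not\perp 1$ very explicit (it is exactly $\theta(0)\neq 0$), at the cost of invoking Theorem~3. The paper's argument is self-contained and, because it first proves the general commutation formula, feeds directly into the proof of Theorem~5 that follows.
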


\begin{proof}
We first show that for $f\in H^2$,
\begin{equation}\tag{$\ast$}
S^\ast H_{\psi}^{\ast}H_{\psi}Sf=H_{\psi}^{\ast} H_{\psi}f-\langle  z\psi f, 1\rangle P_+(\bar{\psi}\bar{z}).\label{i}
\end{equation}
To this end observe that for a $g\in H^2$,
\begin{align*}
\langle S^\ast H_{\psi}^{\ast} H_{\psi}Sf,g\rangle&=\langle H_{\psi}^{\ast}H_{\psi}(zf),zg\rangle=\langle P_+(\bar \psi P_{-}(\psi zf)),zg\rangle=\langle \bar \psi P_-(\psi zf),zg\rangle\\[4pt]
&=\langle \psi zf,P_-(\psi zg)\rangle
=\langle \psi f,\bar{z}P_-(\psi zg)\rangle=\langle P_-(zP_-(\psi f)),\psi zg\rangle.
\end{align*}
Since
\begin{align*}
P_-(zP_-(\psi f))&=P_-\biggl(z\sum\limits_{n=1}^\infty\langle\psi f,\bar{z}^n\rangle \bar{z}^n\biggr)=P_-\biggl(\langle\psi f,\bar{z}\rangle 1+\sum\limits_{n=2}^\infty\langle\psi f,\bar{z}^n\rangle \bar{z}^{n-1}\biggr)\\[4pt]
&=zP_-(\psi f)-\langle z\psi f, 1\rangle,\end{align*}
we get
\begin{align*}
\langle S^\ast H_{\psi}^{\ast} H_{\psi}Sf,g\rangle&=\langle  zP_-(\psi f)-\langle z\psi f, 1\rangle, \psi zg\rangle= \langle \bar \psi P_-(\psi f) - \bar{z}\bar{\psi}\langle  z\psi  f,1\rangle, g\rangle\\[4pt]&=
 \langle  H_{\psi}^{\ast} H_{\psi} f-\langle  z\psi  f,1\rangle P_+(\bar{z}\bar{\psi}), g\rangle
 \end{align*}
which proves (\ref{i}).
 Replacing  $f$ by $S^{\ast}f$ in (\ref{i}) we get
\begin{equation}
S^\ast H_{\psi}^{\ast}H_{\psi}S S^{\ast}f=H_{\psi}^{\ast} H_{\psi}S^\ast f- \langle  z\psi S^\ast f, 1\rangle P_+(\bar{\psi}\bar{z}).\label{star}
\end{equation}
Furthermore,  the identity
\[
SS^\ast=I-\langle\cdot, 1\rangle 1,
\]
implies
\begin{equation}
S^\ast H_{\psi}^{\ast}H_{\psi}S S^{\ast}f= S^\ast H_{\psi}^{\ast}H_{\psi}f -\langle f,1\rangle S^{\ast}H_{\psi}^{\ast}H_{\psi}1.\label{star1}
\end{equation}
It then follows from (\ref{star}) and (\ref{star1}) that
\begin{equation}
S^\ast H_{\psi}^{\ast}H_{\psi}f- H_{\psi}^{\ast} H_{\psi}S^\ast f= - \langle z\psi S^\ast f,1\rangle P_+(\bar{\psi}\bar{z}) +\langle f,1 \rangle  S^\ast H_{\psi}^{\ast} H_{\psi} 1.   \label{ii}
\end{equation}
Now notice that for $f,h\in  E^+_s(H_{\psi})$  we have
\begin{align*}
\langle S^\ast H_{\psi}^{\ast}H_{\psi}f- H_{\psi}^{\ast} H_{\psi}S^\ast f, h\rangle &=  \langle S^\ast H_{\psi}^{\ast}H_{\psi}f,h\rangle - \langle H_{\psi}^{\ast} H_{\psi}S^\ast f, h\rangle\\ &= \langle  s^2 S^\ast f , h\rangle - \langle  s^2 S^\ast f, h\rangle= 0.
\end{align*}
So, if we assume that $f\in E^+_s(H_{\psi}) $ is such that $f(0)=0$, then the last equality and   (\ref{ii}) imply  that for any $h\in  E^+_s(H_{\psi})$,
\[
\langle  z\psi S^\ast f,1\rangle \langle  P_+(\bar{\psi}\bar{z}), h\rangle =\langle  \psi  f,1\rangle \langle  P_+(\bar{\psi}\bar{z}), h\rangle=0.
\]
Now we are going to prove that if $f$ is as above, then
\[
\langle  \psi f,1\rangle =0.
\]
Let  $h\in E^+_s(H_{\psi})$ be such that  $h(0)\ne0$ and put   $g=H_{\psi}h$. Then $g\in E^-_s(H_{\psi})$,
  $h_1=Cg\in E^+_s(H_{\psi})$  and we get
\begin{align*}
 \langle  P_+(\bar{\psi}\bar{z}), h_1\rangle &=\langle CP_-(\psi),h_1\rangle=\langle CP_-(\psi),Cg\rangle\\[4pt]
&=\langle g,P_-(\psi)\rangle
=\langle H_{\psi}h, H_{\psi} 1\rangle=\langle H_{\psi}^\ast H_{\psi}h, 1\rangle=
s^2\langle h,1\rangle\ne 0.
\end{align*}

$S^\ast$-invariance of $E^+_s(H_\psi)$ follows from (\ref{star}).
\end{proof}

If all functions in a nontrivial Schmidt space  $E^+_s(H_\psi)$ vanish at zero, then   there is  a positive integer $n$ such that $E^+_s(H_\psi)= z^n N$, where $N$ is a subspace of $H^2$ and $N\not\perp 1$.

In this case we get the following theorem.
\begin{theorem}
If  $s>0$ is  a singular value for $H_{\psi}$ and $E^+_s(H_{\psi})=z^n N $ where $N\not\perp 1$, then $N$  is a nearly $S^\ast$-invariant subspace of $H^2$.
\end{theorem}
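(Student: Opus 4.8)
The plan is to deduce Theorem 5 from the identity already established in the proof of Theorem 4, by proving the following statement: \emph{every $u\in E^+_s(H_\psi)$ with $u\in z^{n+1}H^2$ satisfies $S^\ast u\in E^+_s(H_\psi)$.} Granting this, if $f\in N$ with $f(0)=0$, then $u:=z^nf$ lies in $E^+_s(H_\psi)=z^nN$ and, since $f(0)=0$, also in $z^{n+1}H^2$; hence $S^\ast u=z^{n-1}f\in E^+_s(H_\psi)=z^nN$, and dividing by $z^n$ gives $S^\ast f=z^{-1}f\in N$. Thus $N$ is nearly $S^\ast$-invariant.

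To obtain the displayed implication, fix $u\in E^+_s(H_\psi)$ with $u\in z^{n+1}H^2$; in particular $u(0)=0$, so $zS^\ast u=u$ and $\langle u,1\rangle=0$. Specialising (\ref{ii}) to $f=u$ and using $H_\psi^\ast H_\psi u=s^2u$ yields
\[
H_\psi^\ast H_\psi(S^\ast u)=s^2S^\ast u+\langle\psi u,1\rangle\,P_+(\bar\psi\bar z).
\]
Since $s>0$ is a singular value, $H_\psi\neq0$, hence $P_+(\bar\psi\bar z)\neq0$; therefore $S^\ast u\in E^+_s(H_\psi)$ provided $\langle\psi u,1\rangle=0$. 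As $P_-(\psi u)=H_\psi u\in\overline{H^2_0}$ is orthogonal to $1$, we have $\langle\psi u,1\rangle=\langle P_+(\psi u),1\rangle=(P_+(\psi u))(0)$, so it remains only to see that $P_+(\psi u)$ vanishes at the origin.

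For this I would invoke the structure from the proof of Theorem 3. Let $\psi_s$ be the unimodular function attached to $H_\psi$ and $\theta$ the inner function with $\kerr H_{\psi-s\psi_s}=\theta H^2$, so $E^+_s(H_\psi)\subset\theta H^2$; applying $H_{\psi-s\psi_s}$ to $\theta$ shows $(\psi-s\psi_s)\theta\in H^2\cap L^\infty=H^\infty$, so we may write $\psi-s\psi_s=\bar\theta\chi$ with $\chi\in H^\infty$. For $u\in E^+_s(H_\psi)$ one has $H_\psi u=s\psi_s u$ with $\psi_s u\in\overline{H^2_0}$, while $u\in\kerr H_{\psi-s\psi_s}$ gives $(\psi-s\psi_s)u\in H^2$; writing $u=\theta u_1$ and using $P_+((\psi-s\psi_s)u)=(\psi-s\psi_s)u$ and $P_+(\psi_s u)=0$ we get
\[
P_+(\psi u)=P_+\bigl((\psi-s\psi_s)u\bigr)+sP_+(\psi_s u)=(\psi-s\psi_s)u=\bar\theta\chi\,\theta u_1=\chi u_1 .
\]
Hence $\langle\psi u,1\rangle=(\chi u_1)(0)=\chi(0)\,u_1(0)$.

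The argument is closed by counting orders of vanishing at $0$. Let $m$ be the multiplicity of the zero of $\theta$ at $0$. Since $N\not\perp1$ there is $g\in N$ with $g(0)\neq0$, and then $z^ng\in E^+_s(H_\psi)\subset\theta H^2$ has a zero of order exactly $n$ at $0$, so $m\leq n$. On the other hand $u\in z^{n+1}H^2$ vanishes to order at least $n+1$ at $0$, whence $u_1=u/\theta$ vanishes to order at least $(n+1)-m\geq1$; in particular $u_1(0)=0$, so $\langle\psi u,1\rangle=0$, as required. The step I expect to be the main obstacle is the identification $P_+(\psi u)=\chi u_1$: it rests on the factorisation $\psi-s\psi_s=\bar\theta\chi$ together with $E^+_s(H_\psi)\subset\kerr H_{\psi-s\psi_s}$, both taken from the proof of Theorem 3, and once this is in hand the vanishing of $\langle\psi u,1\rangle$ — and hence the whole theorem — is a bookkeeping matter.
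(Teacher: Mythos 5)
Your proof is correct, and it takes a genuinely different route from the paper's at the decisive step. Both arguments make the same initial reduction --- it suffices to show that $u\in E^+_s(H_\psi)\cap z^{n+1}H^2$ implies $S^\ast u\in E^+_s(H_\psi)$ --- and both ultimately turn on the vanishing of $\langle\psi u,1\rangle$. The paper obtains this vanishing by transferring the problem to $H_{z^n\psi}$ via Lemma 1, noting that $N\subset E^+_s(H_{z^n\psi})\not\perp 1$ and re-running the Theorem 4 argument there; it must then additionally verify that $H_\psi^\ast H_\psi S^\ast u$ vanishes to order at least $n$ at the origin before it can pass back from $H_{z^n\psi}$ to $H_\psi$ by applying $S^n$ to the identity $s^2S^\ast f_1=S^{\ast n}H_\psi^\ast H_\psi S^\ast f$. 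You instead import the structure from the proof of Theorem 3: the factorization $\psi-s\psi_s=\bar\theta\chi$ with $\chi\in H^\infty$ and the inclusion $E^+_s(H_\psi)\subset\theta H^2=\kerr H_{\psi-s\psi_s}$ give $P_+(\psi u)=\chi\,(u/\theta)$, and the hypothesis $N\not\perp 1$ enters precisely to bound the order of the zero of $\theta$ at $0$ by $n$, so that $u/\theta$ still vanishes at the origin; identity (\ref{ii}) then finishes in one line, with no detour through $H_{z^n\psi}$. Your supporting claims all check out: $(\psi-s\psi_s)\theta\in H^2\cap L^\infty=H^\infty$ because $\theta\in\kerr H_{\psi-s\psi_s}$; $P_+(s\psi_s u)=0$ because $s\psi_s u=H_\psi u\in\overline{H_0^2}$; and $m\le n$ because $z^ng\in\theta H^2$ for some $g$ with $g(0)\ne0$. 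The trade-off is that the paper's route stays within the elementary operator identities of Theorem 4 and Lemma 1, while yours leans on the Adamyan--Arov--Krein-type structure behind Theorem 3 but is shorter at the end and makes visible exactly where $N\not\perp1$ is used. (Minor point: the observation that $P_+(\bar\psi\bar z)\ne0$ is not needed for your direction of the implication --- once $\langle\psi u,1\rangle=0$, the identity gives $H_\psi^\ast H_\psi S^\ast u=s^2S^\ast u$ regardless.)
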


\begin{proof}
We  first observe that the  nearly $S^\ast$-invariance of $N$ is equivalent to the implication: if  $f\in E^+_s(H_\psi)$ is such that $f^{(k)}(0)=0$ for $k=0,1,\dots, n$,  then  $S^{\ast}f\in E^+_s(H_\psi)$.
Indeed,  such an $f$ can be written as $f= z^nf_1=S^nf_1$, where $f_1\in N$ and $f_1(0)=0$ and nearly $S^\ast$-invariance of $N$ means that $S^{\ast}f_1\in N$, or in other words,
\[
z^nS^\ast f_1 = S^nS^{\ast}f_1= S^{n-1}f_1=S^{\ast}f\in E^+_s(H_{\psi}).
\]

Next notice  that it follows from Lemma 1 that $N\subset  E^+_s(H_{z^n\psi})$. Moreover, the assumptions imply that $ E^+_s(H_{z^n\psi}) \not\perp 1$ and, by the previous theorem, $ E^+_s(H_{z^n\psi})$ is nearly $S^\ast$- invariant. Furthermore it follows
 from the proof of this  theorem that
\begin{equation}
 0= \langle \psi z^n f_1, 1\rangle=\langle \psi  f, 1\rangle\label{zero}
\end{equation}
for any $f_1\in N $  such that $f_1(0)=0$.

\sk

 Now our aim is to show that if $f\in E^+_s(H_\psi)$ is such that $f^{(k)}(0)=0$ for $k=1,\dots, n$, then the  function $h(z)=H_{\psi}^\ast H_{\psi}S^{\ast}f(z)$ satisfies the condition $h^{(k)}(0)=0$ for $k=0,1,\dots, n-1$. To this end note that if $\langle f,z^k\rangle=0$, then
\begin{align*}
0&=  s^2\langle f,z^k\rangle = \langle H_{\psi}^\ast H_{\psi}f ,z^k\rangle=  \langle P_-(\psi f), \psi z^k\rangle= \langle \bar z\psi  f,  \bar zP_-(\psi z^k)\rangle\\
&=\langle  P_-(\bar z\psi  f),
\bar z(\psi z^k-P_+(\psi z^k)\rangle=\langle\bar \psi  P_-(\bar z\psi  f), z^{k-1}\rangle -\langle z P_-(\bar z\psi  f),P_+(\psi z^k) \rangle\\
&=\langle H_{\psi}^\ast H_{\psi} S^\ast f, z^{k-1}\rangle -\langle \langle\bar z\psi f, \bar z  \rangle 1 ,P_+(\psi z^k) \rangle=\langle H_{\psi}^\ast H_{\psi} S^\ast f, z^{k-1}\rangle - \langle \psi f,1\rangle\langle 1, P_+(\psi z^k) \rangle.
\end{align*}
Since by (\ref{zero})  the last term is zero,  our claim is proved.

\sk

Now observe that if $f\in  E^+_s(H_{\psi})$ and $f_1$ are as above,  then the nearly $S^\ast$-invariance of $E^+_s(H_{z^n\psi})$ yields
\begin{align*} s^2S^\ast f_1&= H_{z^n\psi}^\ast H_{z^n\psi}S^\ast f_1= P_+(\bar{z}^n\bar{\psi}P_{-}(
{z^n\psi} S^\ast f_1))\\
&= S^{\ast n}P_+(\bar\psi P_{-}(\psi S^\ast f))
=S^{\ast n}H_{\bar\psi} ^\ast H_{\psi}S^{\ast}f.
\end{align*}

Since the function $h=H_{\bar\psi} ^\ast H_{\psi}S^{\ast}f $  has zero at least  of order $n$ at 0, multiplying the equality
\[
s^2S^*f_1= S^{\ast n}H_{\bar\psi} ^\ast H_{\psi}S^{\ast}f
\]
by $S^n$ from the left we arrive at
\[ s^2 S^\ast f=
H_{\psi}^\ast H_{\psi}S^\ast f,
\]
which ends the proof.
\end{proof}


\begin{thebibliography}{999}

\bibitem[AAK]{AAK}
V.\,M. Adamyan, Z. Arov, M.\,G. Kre\v{\i}n, \emph{Analytic properties of Schmidt pairs for a Hankel operator and the generalized Schur-Takagi problem},
Math. USSR Sbornik, Vol. 15 (1971), No. 1, 31--73.

\bibitem[ER]{ER}
M. Engli\u{s} and R. Rochberg, \emph{The Dixmier trace of Hankel operators on the Bergman space}, J. Funct. Anal. \textbf{257} (2009), No. 5, 1445--1479.

\bibitem[FM]{FM}
E. Fricain and J. Mashreghi, \emph{The Theory of $\H(b)$
Spaces}, Vol. 1 and 2, Cambridge University Press 2016.

\bibitem[GP]{GP}
P. G\'erard and A. Pushnitski, \emph{Weighted model spaces and Schmidt subspaces of Hankel operators},
J. London Math. Soc. (2) \textbf{101} (2020), 271--298.

\bibitem[GP1]{GP1}
P. G\'erard and A. Pushnitski, \emph{The structure of Schmidt subspaces of Hankel operators: a short proof},  Studia Math. \textbf{ 256}  (2021),  61--71.

\bibitem[HV]{HV}
R. Hagger and J. A. Virtanen, \emph{Compact Hankel operators with bounded symbols}, J. Operator Theory \textbf{86} (2021), No. 2, 317--329.

\bibitem [H]{H} D. Hitt,  \emph{Invariant subspaces of $H^2$ of an annulus}, Pacific J. Math. \textbf{134} (1988),
101--122.

\bibitem[N1]{N1}
N.\,K. Nikolski, \emph{Operators, functions, and systems: an easy reading}, vol. I: \emph{Hardy, Hankel, and Toeplitz}, Mathematical Surveys and Monographs 92, American Mathematical Society, Providence, RI, 2002.

\bibitem[P]{P}
V.\,V. Peller, \emph{Hankel operators and their applications}, Springer-Verlag, Berlin 2003.

\bibitem[S1]{S1} D. Sarason, \emph{Nearly invariant subspaces of the backward shift}, Operator Theory: Advaces and Applications, Vol. 35, Birkh\"auser, Basel (1988),  481--493.

\bibitem[S2]{S2} D. Sarason, \emph{Kernels of Toeplitz operators}, Operator Theory: Advaces and Applications, Vol. 35, Birkh\"auser, Basel (1994), 153--164.

\bibitem[S]{S}
D. Sarason, \emph{Sub-Hardy Hilbert Spaces in the Unit Disk}, in:
University of Arkansas Lecture Notes in Mathematical Sciences, Vol.
10, J. Wiley \& Sons, Inc., New York 1994.

\bibitem[DZ]{DZ}
D. Zheng, \emph{Toeplitz Operators and Hankel Operators on the Hardy Space of the Unit Sphere}, J. Funct. Anal. \textbf{149} (1997), 1--24.

\bibitem[Z]{Z} K. Zhu, \emph{Operator Theory in Function Spaces}, Marcel Dekker, Inc., New York 1990.
\end{thebibliography}
\end{document}